\newcommand{\ignore}[1]{}
\newtheorem{theorem}{Theorem}[section]
\newtheorem{lemma}[theorem]{Lemma}
\newcommand{\Proof}[1]
        {
        \noindent
        \emph{Proof #1.}~
        }
\newsavebox{\smallProofsym}                     
\newcommand{\smalleop}[1]
        {
        \mbox{} \hfill #1~~\usebox{\smallProofsym}\!\!\!\!\!\!\
        }
\newcommand{\parag}[1]{\vspace{2mm}

\noindent{\bf #1} }
\newcommand{\pts}{\mathcal P}
\newcommand{\rc}{\mathsf{rc}}
\newcommand{\pg}{\mathsf{pg}}
\begin{document}
 
\newpage
\title{An Upper Bound for the Number of Rectangulations of a Planar Point Set\thanks{Some of the research work of this project was done as part of the 2021 New York Discrete Math REU, funded by NSF grant DMS-2051026}}

\author{
Hannah Ashbach\thanks{Mount Mary University, Milwaukee, WI 53222, USA. {\sl ashbachh@gmail.com}}
\and
Kiki Pichini\thanks{Columbia University, New York, NY, 10027, USA. {\sl k.pichini@columbia.edu}} }
\maketitle
\begin{abstract}
We prove that every set of $n$ points in the plane has at most $(16+5/6)^n$ rectangulations. This improves upon a long-standing bound of Ackerman. Our proof is based on the cross-graph charging-scheme technique.
\end{abstract}

\section{Introduction}
Let $\pts$ be a finite point set in the plane. How many graphs can be embedded on $\pts$ with non-crossing straight edges? This natural question has a long and rich history. Tutte \cite[Chapter 10]{tutte} studied the problem for the case of unlabeled vertices. Ajtai, Chv\'atal, Newborn, and Szemer\'edi \cite{acns} introduced the ubiquitous crossing lemma to bound the maximum number of labelled graphs that can be embedded over any set of $n$ points. More precisely, denote by $\pg(\pts)$ the number of labelled plane graphs that can be embedded over $\pts$. By labelled, we mean that different embeddings of the same graph are counted as separate. The main goal of Ajtai, Chv\'atal, Newborn, and Szemer\'edi was to study $\pg(n)$, the maximum of $\pg(\pts)$ taken over every set $\pts$ of $n$ points. 

The above problem has many variants, some of which might be considered more interesting than the original. For example, one may wish to find the maximum number of Hamiltonian cycles that can be embedded over a set of $n$ points. Other main variants involve triangulations, spanning trees, and more (for example, see \cite{tri4, tri5, tri2, tri1, tri3,cgcs1}). Euler \cite{euler} introduced the famous Catalan numbers to study the number of triangulations of a point set in convex position. Beyond the extremal problems, algorithms for counting and enumerating such plane graphs are also being developed (for example, see \cite{alg1, alg3, alg2}).

We say that a set of points is in \emph{general position} if no two points share the same $x$- or $y$-coordinate. For a set $\pts$ of $n$ points in general position within a rectangle $B$, a \textit{rectangulation} $G$ of $(B, \pts)$ is a partition of $B$ into rectangles using axis-parallel segments, so that every segment contains a single point of $\pts$. The segments do not intersect in their interiors, although an endpoint of one segment may lie in the interior of another. For example, Figure \ref{a} depicts two rectangulations of the same point set $\pts$. Note that there is a bijection between the points of $\pts$ and the
segments used to partition $B$. We define $R(\pts)$ as the set of rectangulations of $\pts$, and $\rc(\pts)$ as the number of rectangulations of $\pts$. In other words, $\rc(\pts)=|R(\pts)|$. We denote by $\rc(n)$ the maximum of $\rc(\pts)$ taken over every set $\pts$ of $n$ points in general position.

\begin{figure}[h]
    \centering
    \includegraphics[scale=.76]{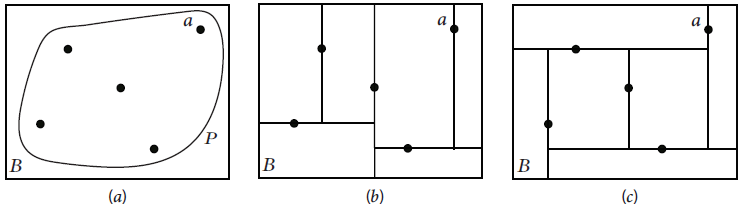}
    \caption{\small\sf (a) Point set $\pts$ in rectangle $B$. (b) The rectangulation $G$ of $(B, \pts)$. (c) The rectangulation $G'$ of $(B, \pts)$.}
    \label{a}
\end{figure}

Many combinatorial and algorithmic aspects of rectangulations have been studied (for example, see \cite{rec1, rec2, rec3}). Ackerman, Barequet, and Pinter \cite{rec1} proved $\rc(n)= O(20^n)$, which was later improved by Ackerman \cite{ack} to $\rc(n)=O(18^n\cdot n^4)$. Felsner \cite{felsner} then proved that $\rc(n)=\Omega(8^n\cdot n^4)$, and this is the current best lower bound. 
Referring to the upper bound for $\rc(n)$, Felsner also states that ``To improve this bound remains an intriguing problem.'' The current work further improves Ackerman's upper bound:

\begin{theorem}\label{mainthm}
$\rc(n)\le (16+5/6)^n$.
\end{theorem}

Our proof is based on the \textit{cross-graph charging-scheme} technique. This technique was introduced by Sharir and Welzl \cite{cgcs1} and further developed by Sharir and Sheffer \cite{cgcs2}. We show how such an approach could be pushed further to obtain stronger bounds for $\rc(n)$.

In Section \ref{sec2}, we introduce additional notation and properties of rectangulations. Section \ref{sec3} contains the proof of Theorem \ref{mainthm}.

\section{Rectangulation Preliminaries}\label{sec2}

In this section, we introduce several properties and definitions involving rectangulations. We will rely on those in our proof in Section 3.

Given a vertex \textit{a} of a rectangulation \textit{G}, we denote as $(a, G)$ the segment in $G$ containing $a$. We consider $(a, G)$ and $(a, G')$ as distinct segments, even if they are geometrically identical. For example, in Figures 1(b) and 1(c), the segments $(a, G)$ and $(a, G')$ are geometrically identical, but we consider these as distinct segments. An \textit{intersection} is defined as an endpoint of one segment lying in the interior of another segment or on $B$. We define the \textit{degree} of a segment $(a, G)$ as the number of intersections on $(a, G)$. For instance, in Figures 1(b) and 1(c), the segments $(a, G)$ and $(a, G')$ have degrees of 2 and 3, respectively. Note that no segment has degree smaller than 2, because the two endpoints of each segment are intersections by definition.

For an integer $j\ge 2$ and a rectangulation $G$, we denote by $d_j(G)$ the number of segments of degree $j$ in $G$. For example, with respect to the rectangulations in Figures 1(b) and 1(c), we have that $d_2(G)=2$ and $d_2(G')=1$. Finally, the expected value of $d_j(G)$ for a rectangulation chosen uniformly from $R(\pts)$ is denoted as $\hat{d}_j(\pts).$ In other words,
\begin{equation}
    \hat{d}_j(\pts)=\frac{\sum_{R\in R(\pts)}d_j(R)}{\rc(\pts)}.
\end{equation}

Consider segments $s_1=(a_1,G)$ and $s_2=(a_2,G)$ such that an endpoint $t$ of $s_1$ is contained in the interior of $s_2$. We define \textit{rotating} the intersection $t$ as shortening $s_2$ until one of its endpoints becomes $t$, and then extending $s_1$ until it hits another segment or $B$. Parts (a) and (b) of Figure \ref{c} depict a rotation of the intersection $t_1$. A rotation is \textit{non-valid} if an endpoint of another segment is contained in $s_2$ before the rotation but not after it. Figure \ref{c} depicts one valid rotation and one non-valid rotation.

\begin{figure}[h]
    \centering
    \includegraphics[scale=.8]{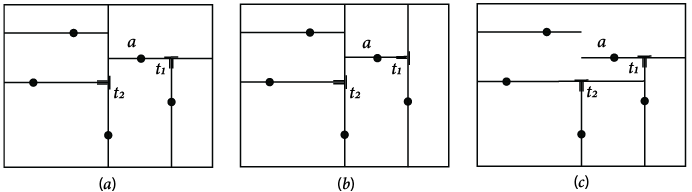}
    \caption{\small \sf Valid and non-valid rotations. (a) A rectangulation $G$. (b) The valid rotation of $t_1$. (c) The non-valid rotation of $t_2$.}
    \label{c}
\end{figure}

In the following, we sometimes refer to left and right endpoints of a segment. This notation applies only to horizontal segments. One can handle vertical segments by symmetrically considering top and bottom endpoints. 

Every segment of degree $i>2$ can be shortened by applying a valid rotation. Indeed, let $s=(a, G)$ be a segment of degree $i>2$. Let $t$ be the rightmost intersection on $s$ that is not the right endpoint of $s$. Without loss of generality, we assume that $t$ is to the right of $p$ (otherwise, we symmetrically define $t$ as the leftmost intersection on $s$ that is not the left endpoint). By definition, the rotation of $t$ is valid and shortens $s$, as asserted. For an example of this, see the segment of $a$ in Figure \ref{c}.

Let $s=(a, G)$ be a segment of degree $i>2$. By the preceding paragraph, we can repeatedly shorten $s$ through valid rotations, until we obtain a segment $s'=(a, G')$ of degree 2. We say that $s'$ is the degree 2 segment obtained by \emph{trimming} $s$. For example, the valid rotation of $t_1$ in Figure \ref{c} trims $(a, G)$ down to a degree 2 segment.

\section{Proof of Theorem \ref{mainthm}}\label{sec3}

The following lemma exposes a connection between $\hat{d}_2$ and $\rc(n)$. A somewhat similar argument appears in Theorem 4.3.1 of \cite{ack}.

\begin{lemma} \label{le:rcVSavgDeg}
For $n\ge2$, assume that there exists $\delta_n>0$ such that for every set $\pts$ of $n$ points in general position we have that $\hat{d}_2(\pts)\ge\delta_nn$. Then, $\rc(n)\le \frac{2}{\delta_n}\rc(n-1)$.
\end{lemma}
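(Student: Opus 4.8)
The strategy is a double-counting / charging argument. Fix a set $P$ of $n$ points. I want to relate rectangulations of $P$ to rectangulations of $(n-1)$-point subsets via the trimming operation introduced in Section~\ref{sec2}. The idea is: given a rectangulation $G \in R(P)$ together with a distinguished point $a \in P$ whose segment $(a,G)$ has degree exactly $2$, I can delete $a$ from $P$. After deletion, the two rectangles on either side of the degree-$2$ segment $(a,G)$ merge into one, and what remains is a valid rectangulation $G^- $ of $(B, P \setminus \{a\})$, an $(n-1)$-point set. So each such pair $(G, a)$ maps to a rectangulation of an $(n-1)$-point set. Conversely, I need to bound how many pairs $(G,a)$ can map to the same target rectangulation $H$ of $P \setminus \{a\}$: reinserting the point $a$ into a rectangle of $H$ can be done in at most two ways (the segment through $a$ is either horizontal or vertical), so each target $H$ has at most $2$ preimages for a fixed deleted point.

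**Carrying it out.** First I would count pairs $(G, a)$ where $G \in R(P)$ and $(a,G)$ has degree $2$. Summing over all $G \in R(P)$, the number of such pairs is exactly $\sum_{G \in R(P)} d_2(G) = \hat d_2(P) \cdot \rc(P)$ by the definition of $\hat d_2$ in~(1). By the lemma's hypothesis, this is at least $\delta_n n \cdot \rc(P)$. Second, I would set up the map $\Phi \colon (G,a) \mapsto (G^-, a)$, where $G^-$ is the rectangulation of $P \setminus \{a\}$ obtained by deleting the point $a$ and its degree-$2$ segment (merging the two adjacent rectangles). For the codomain, note that $P \setminus \{a\}$ ranges over the $n$ subsets of size $n-1$, and for each such subset $Q = P \setminus \{a\}$ we have $|R(Q)| = \rc(Q) \le \rc(n-1)$. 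Third, I would argue the map is at-most-$2$-to-$1$: given $(H, a)$ with $H \in R(P \setminus \{a\})$, any preimage $(G,a)$ must be obtained from $H$ by inserting point $a$ with a segment through it; since $a$'s coordinates are fixed, the segment is determined up to its orientation (horizontal or vertical), and extending it from $a$ in a fixed orientation until it meets other segments or $B$ is forced — so there are at most $2$ choices. Hence the number of pairs $(G,a)$ is at most $2 \sum_{a \in P} \rc(P \setminus \{a\}) \le 2 n \cdot \rc(n-1)$.

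**Putting the two bounds together.** Combining the lower bound $\delta_n n \cdot \rc(P) \le \#\{(G,a)\}$ with the upper bound $\#\{(G,a)\} \le 2 n \cdot \rc(n-1)$ gives $\delta_n n \cdot \rc(P) \le 2 n \cdot \rc(n-1)$, i.e. $\rc(P) \le \frac{2}{\delta_n} \rc(n-1)$. Since this holds for every $n$-point set $P$ in general position, taking the maximum over $P$ yields $\rc(n) \le \frac{2}{\delta_n}\rc(n-1)$, as desired.

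**Main obstacle.** The routine parts are the counting identities; the delicate part is verifying that the deletion operation genuinely produces a \emph{valid} rectangulation of the smaller point set and that the reinsertion really is at most $2$-to-$1$. Specifically, when $(a,G)$ has degree $2$ its two endpoints lie on the boundaries of the two rectangles it separates, and removing it should leave every other segment's endpoints still properly supported — I need to check no other segment had an endpoint in the interior of $(a,G)$ (true, since degree $2$ means its only intersections are its own endpoints) and that merging the two rectangles doesn't violate the one-point-per-segment structure elsewhere (it doesn't, since no other point or segment is disturbed). For the reinsertion direction, the subtle point is that inserting $a$ with, say, a horizontal segment, then extending it until it hits other segments or $B$, is a deterministic process, so orientation is the only freedom — giving the factor $2$. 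I would make sure the general-position hypothesis is what guarantees $a$ lies in the interior of a unique rectangle of $H$ and that the extension process is well-defined.
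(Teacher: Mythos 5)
Your proposal is correct and follows essentially the same route as the paper: both arguments double-count rectangulations of $P$ having a degree-$2$ segment at a point, equating (or bounding) this count by $2\sum_{q\in P}\rc(P\setminus\{q\})$ via insertion/deletion of that segment with the factor $2$ coming from the two orientations, and then invoke the hypothesis $\hat{d}_2(P)\ge\delta_n n$. The only cosmetic difference is that you phrase it as a deletion map that is at-most-$2$-to-$1$ while the paper phrases it as generating rectangulations by insertion, which yields the same inequality.
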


\begin{proof} Let $\pts$ be a set that maximizes $\rc(\pts)$ among all sets of $n$ points in the plane. That is, $\rc(\pts)=\rc(n)$. Rectangulations of $\pts$ can be generated by choosing a point $q\in \pts$ and a rectangulation $G$ of $\pts\setminus\lbrace q \rbrace$, inserting $q$ into $G$, and adding a degree 2 segment that contains $q$. In this manner, a rectangulation $G$ of $\pts$ can be obtained in exactly $d_2(G)$ ways. In particular, if $d_2(G)=0$ then $G$ cannot be obtained at all in this fashion. Therefore, it follows that
\begin{equation}
    \hat{d}_2 \cdot \rc(\pts)=\sum_{G\in R(\pts)}d_2(G)= 2\sum_{q\in \pts}\rc(S\setminus\lbrace q\rbrace ).
\end{equation}

Here, the leftmost expression  equals $\hat{d}_2\cdot \rc(n)$, and the rightmost expression is at most $2n\cdot \rc(n-1)$. Recalling the assumption $\hat{d}_2\ge\delta_nn$, we have that \[\rc(n)=\rc(\pts)\le\frac{2N}{\hat{d}_2}\cdot \rc(n-1)\le\frac{2}{\delta_n}\cdot \rc(n-1).\]
\end{proof}
The following lemma is our first lower bound for $\hat{d}_2(n)$.

\begin{lemma} \label{le:weakDeg2}
For every set $\pts$ of $n$ points in general position within a rectangle, $\hat{d}_2(\pts)\ge\frac{n}{9}$.
\end{lemma}

\begin{proof}We use a charging scheme where, initially, every segment of degree $i$ is given $5-i$ units of charge. The sum of the charges of the segments in a rectangulation $G\in R(\pts)$ is
\begin{equation}
    \sum_{i\ge 2} (5-i)d_i(G)=5\sum_{i\ge 2}d_i(G)-\sum_{i\ge 2} i\cdot d_i(G)=5n-\sum_{i\ge 2} i\cdot d_i(G). 
\end{equation}
 
Every intersection in $G$ is the endpoint of one segment, and increases the degrees of at most two segments by 1. Since the segments of $G$ have $2n$ endpoints, the total sum of the degrees in $G$ is at most $4n$. In other words, 
    \[\sum_{i\le 2}i\cdot d_i(G)\le 4n.\]
Combining this with (3), we obtain that the sum of the charges of the segments of $G$ is at least $5n-\sum_i id_i(G)\ge n$. This tells us that, on average, every segment in $G$ has a charge of at least $1$.

For a segment $(a, G)$, we denote by $c(a, G)$ the charge of that segment. Let 
\[C=\sum_{G\in R(\pts)}\sum_{a\in \pts}c(a, G).\] 
In other words, $C$ is the total charge taken over all segments of all rectangulations. By the preceding paragraph, we have that $C\ge n\cdot \rc(\pts)$.

We now move the entire charge to segments of degree 2, as follows. Let $s=(a, G)$ be a segment of degree $i>2$ and let $s'=(a, G')$ be the segment obtained by trimming $s$. We move the entire charge of $s$ to $s'$. Note that this process does not change the total amount of charge. Thus, we still have that $C\ge n\cdot \rc(\pts)$. Let $t$ denote the maximum charge that any degree 2 segment has after the move. Then, $C\le t\cdot\sum_{G\in R(\pts)} d_2(G)$. Combining the bounds for $C$, we have that $n\cdot \rc(\pts)\le C\le t\cdot\sum_{G\in R(\pts)} d_2(G)$. Rearranging this equation leads to 
\begin{equation}\label{4}
\hat{d}_2(\pts)\ge n/t.
\end{equation}

\begin{figure}
    \centering
    \includegraphics[scale=0.76]{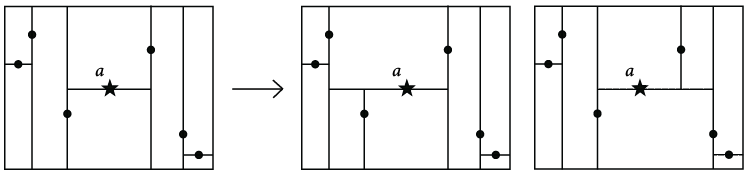}
    \caption{\small\sf The segment of point $a$ receives charge from two segments of degree 3: one obtained by an extension of the segment to the left and the other by an extension to the right.}
    \label{f}
\end{figure}

Any segment $s=(a, G)$ receives charge from at most two segments of degree 3. One of these degree 3 segments is obtained by rotating the left endpoint of $s$. The other is obtained by rotating the right endpoint of $s$. This is demonstrated in Figure \ref{f}. Similarly, $s$ can receive charge from at most three segments of degree 4: one by doing two rotations to the right, another by two rotations to the left, and the third by one rotation to the right and one to the left. Note that $s$ does not get positive charge from any other segment. Thus, the maximum charge $s$ can have is $3\cdot1+2\cdot 2+1\cdot 3=10$.

For $s$ to receive such a charge of 10, it must be extendable at least twice to the right and at least twice to the left.  In this case, $s$ also receives a charge of -1 from at least one segment of degree 6. We conclude that the maximum charge $s$ can have is 9. By repeating the above process for every degree 2 segment, we obtain that $t=9$. Combining this with (\ref{4}) implies that $\hat{d}_2\ge n/9$.
\end{proof}

Combining Lemmas \ref{le:rcVSavgDeg} and \ref{le:weakDeg2} with an induction on $n$ immediately implies the following result.

\begin{theorem} \label{th:Weak}
$\rc(n)\le 18^n$.
\end{theorem}

Theorem \ref{th:Weak} already improves the result from \cite{ack} by a factor of $n^4$. However, we are interested in an exponential improvement. We obtain such an improvement by introducing a more involved charging scheme.

\begin{lemma} \label{le:strongDeg2}
For every set $\pts$ of $n$ points in general position within a rectangle, $\hat{d_2}(\pts)\geq n/(8+5/12)$.
\end{lemma}
\begin{proof}
We begin by applying the same charging scheme as in the proof of Lemma \ref{le:weakDeg2}. 
We then add an additional step of moving charge between degree 2 segments, as described below.
After this step, every degree 2 segment has a charge of at most $8+5/12$.

\begin{figure}[htp]     \centering
    \includegraphics[width=10cm]{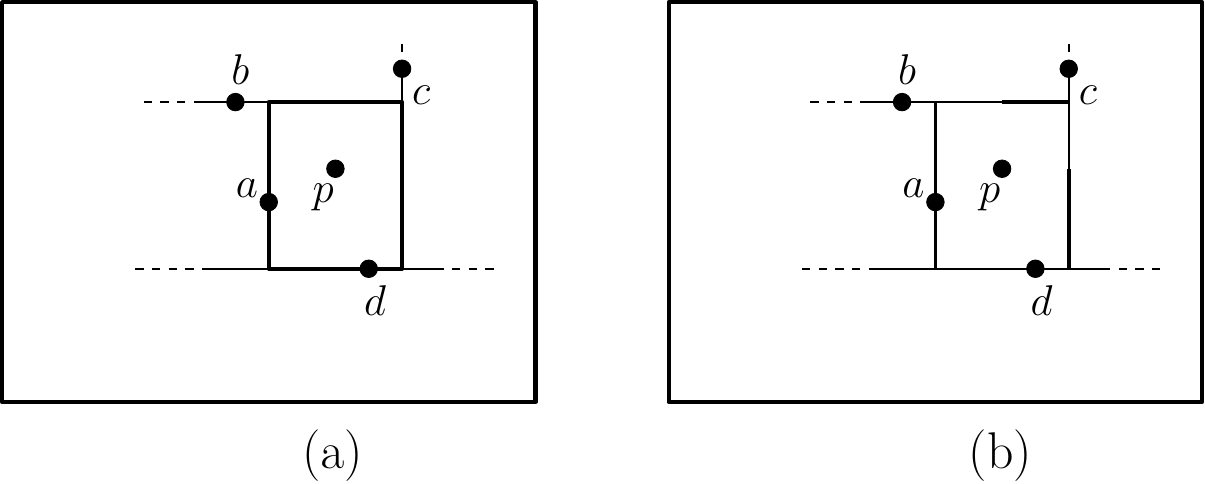}
    \caption{A point $p$ and its four boundary segments. (a) The internal rectangle is in bold. (b) The critical subsegments are bolds}\label{fi:InternalRect}
\end{figure}

Consider a point $p$ and a rectangulation $G$ of $\pts\setminus \{p\}$.
Let $(a,G), (b,G), (c,G),$ and $(d,G)$ be the segments directly to the left, above, to the right, and below $p$, respectively. 
We refer two these four segments as the \emph{boundary segments} of $p$.
See Figure \ref{fi:InternalRect}(a).
A boundary segment might be part of the external rectangle that contains $\pts$, and thus contain no point of $\pts$.
The \emph{internal rectangle} is the rectangle that contains $p$ and is formed by parts of the four boundary segments.
In Figure \ref{fi:InternalRect}(a), the internal rectangle is in bold.

Consider a boundary segment $(v,G)$ that has exactly one of its endpoints on another boundary segment. 
In Figure \ref{fi:InternalRect}, the segments $(b,G)$ and $(c,G)$ have this property, but not $(a,G)$ and $(d,G)$. 
Let $\ell$ be the axis-parallel line that passes through $p$ and intersects $(v,G)$. 
The \emph{critical subsegment} of $(v,G)$ is the open segment in $(v,G)$ between the intersection point with $\ell$ and the endpoint of $(v,G)$ that is on another boundary segment. 
In Figure \ref{fi:InternalRect}(b), the two critical subsegments are bold.
The motivation behind the definition of a critical subsegment will become clear in the following paragraphs.

\begin{figure}[htp]     \centering
    \includegraphics[width=10cm]{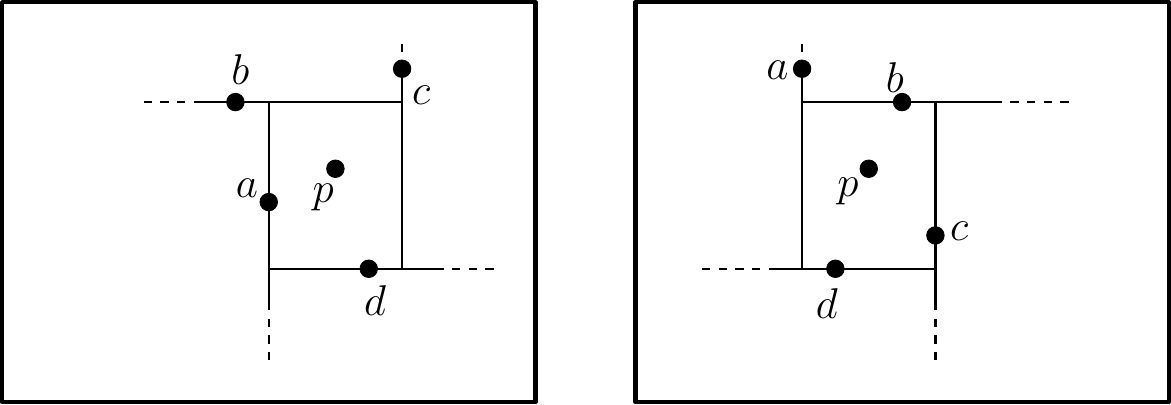}
    \caption{Spiral configurations with opposite orientations.}\label{fi:Spirals}
\end{figure}

We say that $p$ is in a \emph{spiral configuration} if every boundary segment $(v,G)$ satisfies the following:
\begin{itemize}[noitemsep,topsep=1pt]
	\item The boundary segment is not part of the external rectangle.    
    \item Exactly one endpoint of $(v,g)$ is contained in the interior of another boundary segment.
    \item  The critical subsegment of $(v,G)$ does not contain $v$ and does not contain intersection points.  
\end{itemize}
Figure \ref{fi:Spirals} depicts two spiral configurations.

There are two ways of adding in $G$ a degree 2 segment that contains $p$: adding a horizontal segment and adding a vertical segment.
We define the charge of $p$ as the sum of the charges of both degree 2 segments.
To prove the theorem, we move charge across degree 2 segments until every such $p$ has a charge of at most $16+2/3$.
We then equally divide this charge among the two corresponding degree 2 segments, to obtain that every degree 2 segment has a charge of at most $8+1/3$.

\begin{figure}[htp]     \centering
    \includegraphics[width=10cm]{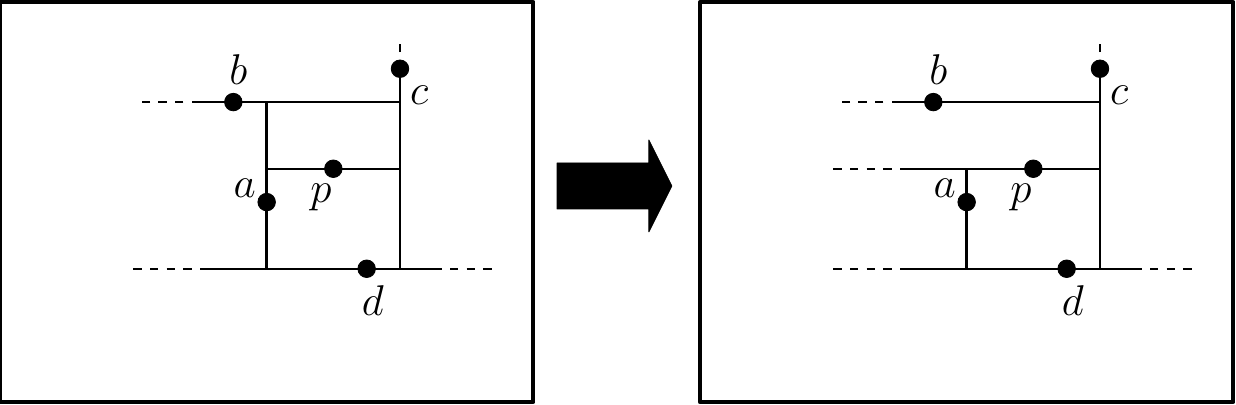}
    \caption{A case where $p$ is extendable beyond $(a,G)$.}\label{fi:Extendable}
\end{figure}

Consider a boundary segment $(v,G)$. 
This segment contains one endpoint $t$ of a degree 2 segment of $p$.
We say that $p$ is \emph{extendable} beyond $(v,G)$ if we can rotate $t$.
That is, $p$ is extendable beyond a boundary segment if a degree 2 segment of $p$ can be extended in that direction.
See Figure \ref{fi:Extendable}.

A main observation: \emph{We can extend $p$ in all four directions if and only if $p$ is in a spiral configuration}. 
Indeed, $p$ is not extendable beyond a boundary segment that has no endpoint on another boundary segment.
If a boundary segment has both of its endpoints on other boundary segments, then there exists another boundary segment with no endpoints on other boundary segments.
Thus, for $p$ to be extendable in all directions, all four boundary segments must have exactly one point on another boundary segment.
If the critical subsegment of a boundary segment $(v,G)$ contains $v$ or an intersection point, then $p$ cannot be extended beyond $(v,G)$. 
There is no other way for $p$ to not be extendable in some direction.

We first consider the case where $p$ is not in a spiral configuration. 
In this case, $p$ is not extendable in at least one direction. 
A degree 2 segment that is not extendable in one direction receives charge from itself, from at most one degree 3 segment, and from at most one degree 4 segment. 
Thus, such a degree 2 segment has at most $3+2+1=6$ charge. 
The proof of Lemma \ref{le:weakDeg2} shows that every degree 2 segment receives a charge of at most 9.
We conclude that, in this case, $p$ has a charge of at most $6+9=15$.

\begin{figure}[htp]     \centering
    \includegraphics[width=10cm]{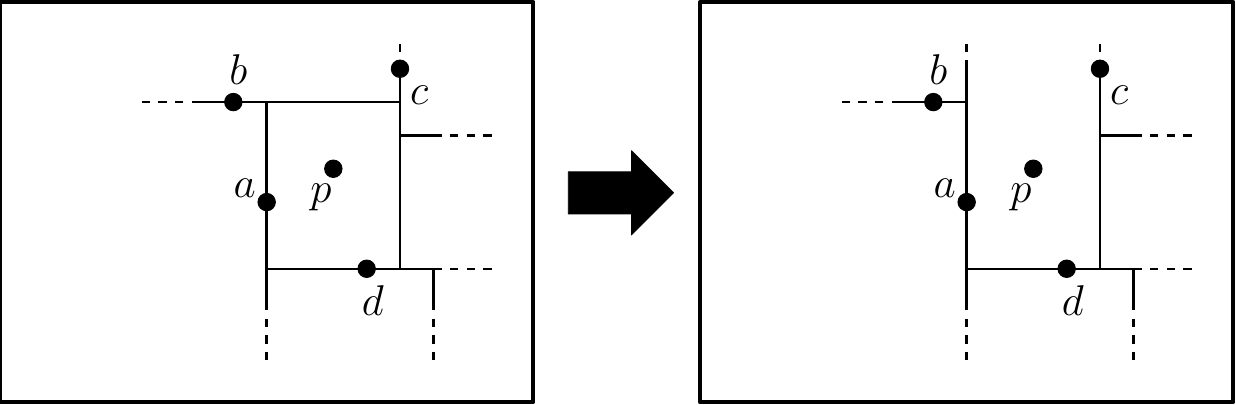}
    \caption{Applying procedure 1, we rotate the intersection of $(a,G)$ and $(b,G)$. }\label{fi:Rotation}
\end{figure}

\parag{Procedure 1.}
It remains to consider the case where $p$ is in a spiral configuration. 
We first assume that we can rotate at least one of the corners of the internal rectangle of $p$.
In this case, we arbitrarily choose a corner that can be rotated and rotate it. 
In Figure \ref{fi:Rotation}, we can rotate the two corners that involve $(a,G)$ and choose to rotate the intersection of $(a,G)$ and $(b,G)$.
We denote the resulting rectangulation as $G'$. 
In $G'$, we cannot extend $p$ beyond the boundary segment that was made longer by the rotation.
After the rotation, this segment has one intersection point in its critical subsegment.
In Figure \ref{fi:Rotation}, this is the segment $(a,G)$.
Thus, $p$ has a charge of at most 15 in $G'$.
We move 1.5 charge from $p$ in $G$ to $p$ in $G'$.
That is, $p$ now has at most 16.5 charge both in $G$ and in $G'$.
We refer to the above as \emph{procedure 1}. 

After applying procedure 1, we have the following situation around $p$:
\begin{itemize}[noitemsep,topsep=1pt]
    \item The boundary segment that was made longer by the rotation has exactly one intersection in its critical subsegment. This segment still has exactly one endpoint contained on another boundary segment.
    \item We might be able extend $p$ beyond the boundary segment $(v,G)$ that contains the endpoint of the segment in the preceding bullet. However, it is also possible for $(v,G)$ to contain intersections or $v$ on its critical subsegment. It is also possible that $(v,G)$ has no endpoint on another boundary segment. 
    \item We can extend $p$ beyond the other two boundary segments. 
\end{itemize}

Every point may be charged at most once by procedure 1. 
Indeed, if a point $p$ is charged in this way in a rectangulation $G'$, then $p$ is not extendable in at most two adjacent directions in $G'$. 
Out of the corresponding boundary segments, the one that has an endpoint on the other must have exactly one intersection point on its critical subsegment. 
We obtain the charging rectangulation $G$ by rotating this intersection point. 

\begin{figure}[htp]     \centering
    \includegraphics[width=11cm]{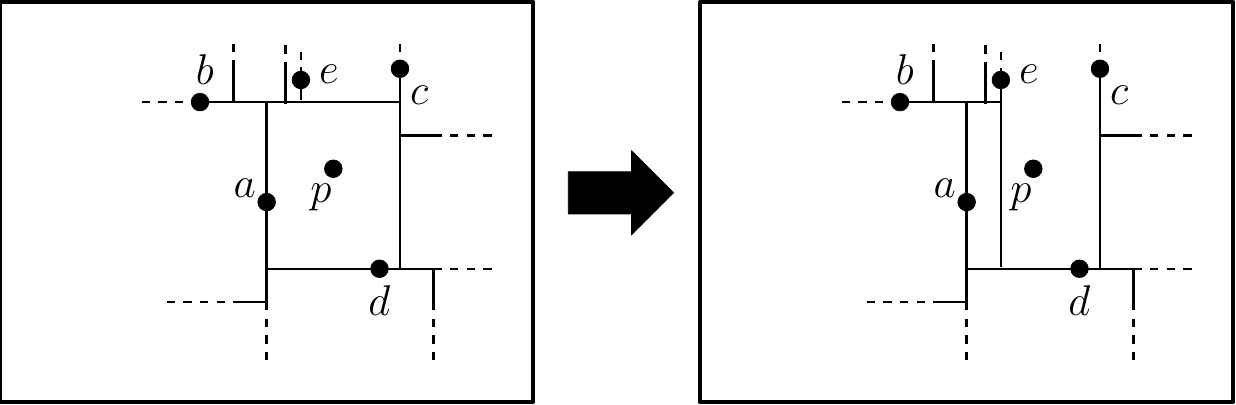}
    \caption{Applying procedure 2, we rotate the intersection of $(b,G)$ and $(e,G)$. }\label{fi:Procedure2}
\end{figure}

\parag{Procedure 2.} It remains to consider the case where $p$ is in a spiral configuration and all corners of the internal rectangle cannot be rotated. 
In this case, intersections on the interior of the boundary segments prevent us from rotating the corners.
Since $p$ is in a spiral configuration, these intersections are not on critical subsegments. 
In the left side of Figure \ref{fi:Procedure2}, two intersections prevent us from rotating the intersection of $(a,G)$ and $(b,G)$.
In the same figure, one intersection prevents us from rotating the intersection of $(c,G)$ and $(d,G)$.

Consider a boundary segment $(v,G)$ that contains an intersection point $t$, such that $p$ is closer to $t$ than it is to $v$.
Then the intersection on $(v,G)$ that is closest to $p$ can be rotated.  
In Figure \ref{fi:Procedure2}, we can rotate the intersection of $(b,G)$ and $(e,G)$.
If such an intersection exists, then we arbitrarily rotate one such intersection and denote the resulting rectangulation as $G'$.
We note that $p$ is not in a spiral configuration in $G'$. 
Indeed, in $G'$, the boundary segment on the opposite side of the internal rectangle has no endpoint on another boundary segment.
In Figure \ref{fi:Procedure2}, this is the segment $(d,G')$.
We move 1.5 charge from $p$ in $G$ to $p$ in $G'$.
That is, $p$ now has at most 16.5 charge both in $G$ and in $G'$.
We refer to the above as \emph{procedure 2}. 

After applying procedure 2, we have the following situation around $p$:
\begin{itemize}[noitemsep,topsep=1pt]
    \item One boundary segment has no endpoint on another boundary segment. The critical subsegment of this boundary segment is empty.
    \item We can extend $p$ beyond the two boundary segments that are adjacent to the one in the preceding bullet.
    \item The fourth boundary segment, which is opposite to the first segment, may have any number of issues or none. 
\end{itemize}

No point can be charged by both procedure 1 and procedure 2. 
Indeed, when $p$ is not extendable in exactly one direction, each procedure requires a different cause for this (no endpoint on another boundary segment or an intersection on the critical subsegment). 
When $p$ is not extendable in two directions, each procedure requires a different relation between these two directions (opposite directions or adjacent directions). 

\begin{figure}[htp]     \centering
    \includegraphics[width=14cm]{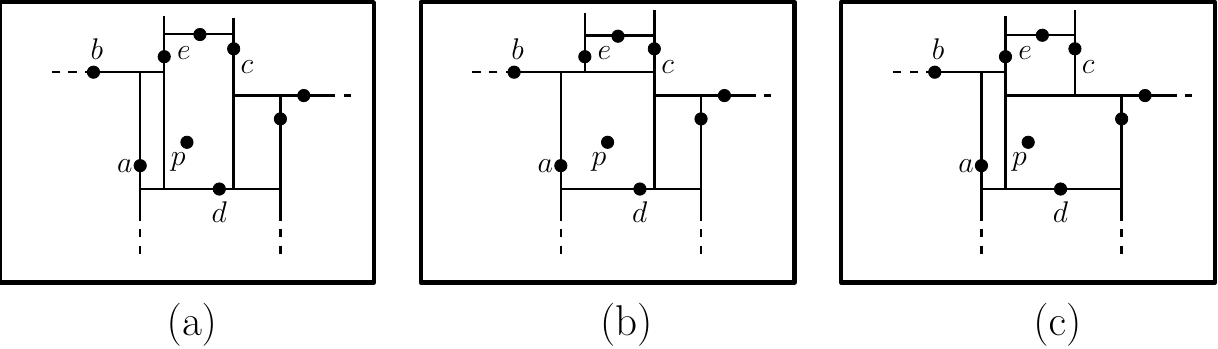}
    \caption{A point that is extendable in three directions cannot be charged twice by procedure 2. }\label{fi:Procedure2twice}
\end{figure}

It is tempting to revise the configuration on the right-hand side of Figure \ref{fi:Procedure2} to be symmetric on both sides. 
That is, to have $p$ not extendable only in one direction while getting charged twice by procedure 2.
This case is depicted in Figure \ref{fi:Procedure2twice}(a), where the procedure 2 charge is supposed to come from the points in Figures \ref{fi:Procedure2twice}(b) and \ref{fi:Procedure2twice}(c).
However, since $p$ is not in spiral configuration in Figure \ref{fi:Procedure2twice}(c), we do not apply procedure 2 in this case.
Figure \ref{fi:Procedure2twice}(a) is not really symmetric since $d$ is to the right of $p$, so $p$ is not extendable below in Figure \ref{fi:Procedure2twice}(c).
We conclude that a point that is extendable in three directions cannot be charged twice by procedure 2. 

By imitating the preceding paragraph, we can show that a point that is extendable in two opposite directions cannot be charged twice by procedure 2. 
However, it is easier to address this case in a different way.
If such a configuration exists, then one of the degree 2 segments of $p$ is not extendable in either direction.
This degree 2 segment has a charge of 3, so $p$ has a charge of at most $3+9=12$.
After receiving charge by two applications of procedure 2, this $p$ has a charge of at most 15.

\begin{figure}[htp]     \centering
    \includegraphics[width=4cm]{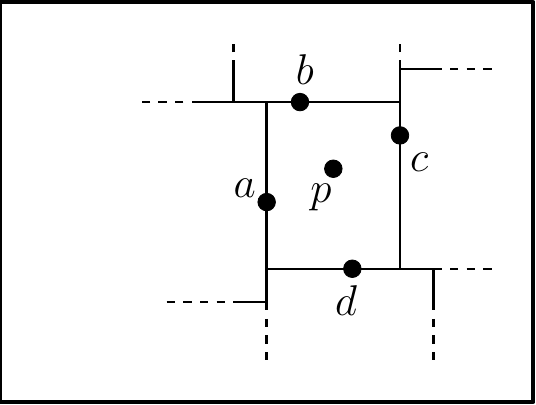}
    \caption{The case where we cannot apply procedures 1 and 2. }\label{fi:LastCase}
\end{figure}

\parag{Procedure 3.} 
It remains to consider the case where $p$ is in a spiral configuration, no corner of the internal rectangle can be rotated, and procedure 2 cannot be applied.
An example is depicted in Figure \ref{fi:LastCase}.
In this case, the points of all four boundary segments are on the boundary of the internal rectangle.  
Every boundary segment contains an endpoint of another segment in the part that does not participate in the internal rectangle.

\begin{figure}[htp]     \centering
    \includegraphics[width=11cm]{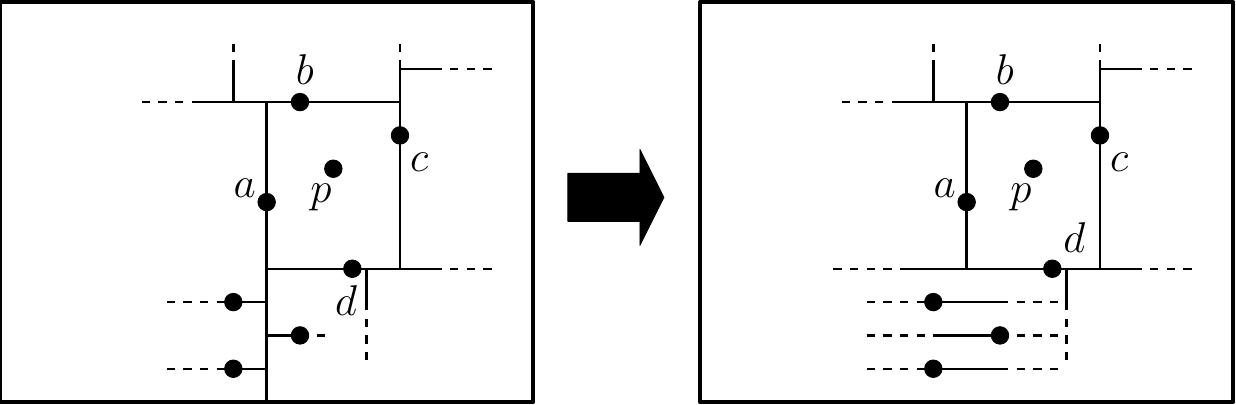}
    \caption{Applying procedure 3, we trim $(a,G)$. }\label{fi:Procedure3}
\end{figure}

The segment $(a,G)$ contains an endpoint of another boundary segment and the endpoint of at least one non-boundary segment. 
Thus, the degree of $(a,G)$ is at least 4.
We trim this segment, as explained in Section \ref{sec2}.
See Figure \ref{fi:Procedure3}.
We then remove the resulting degree 2 segment of $a$, add a vertical degree 2 segment for $p$, and denote the resulting rectangulation as $G'$.
See Figure \ref{fi:Procedure3b}(a).
We move 1 charge from $p$ in $G$ to $a$ in $G'$. 
We refer to the above as \emph{procedure 3}. 

After applying procedure 3, we have the following situation around $a$:
\begin{itemize}[noitemsep,topsep=1pt]
    \item The boundary segment to the right is of degree 2, so $a$ is extendable to the right. 
    \item We can extend $a$ either to the top or to the bottom. The boundary segment at the opposite side has an intersection on its critical subsegment. It may also have no endpoint on another boundary segment.
    \item The segment to the left may have any number of issues, or none. 
\end{itemize}

No point receives charge from procedure 2 and procedure 3.
Indeed, for $a$ to receive charge from procedure 2, there needs to exist a boundary segment $(v,G)$ with no endpoints on other boundary segments.
Also, $a$ needs to be extendable beyond the two boundary segments that are adjacent to $(v,G)$. 
This cannot happen after applying procedure 3.

For $a$ to also receive charge from procedure 1, there needs to be a boundary segment $(v,G')$ with exactly one intersection point on its critical subsegment.
If $a$ is extendable in the other three directions, then $(v,G')$ is either above or below $a$.
Rotating the intersection point in the critical subsegment of $(v,G')$ must put $a$ in a spiral configuration. 
This is impossible, since after the rotation, the boundary segment to the right of $a$ still has two endpoints on other boundary segments.
Thus, if $a$ is charged by procedure 1 and procedure 3, then it is not extendable in two directions.
If $a$ is not extendable in two directions, then it has a charge of at most 12 before applying these procedures.

\begin{figure}[htp]     \centering
    \includegraphics[width=11cm]{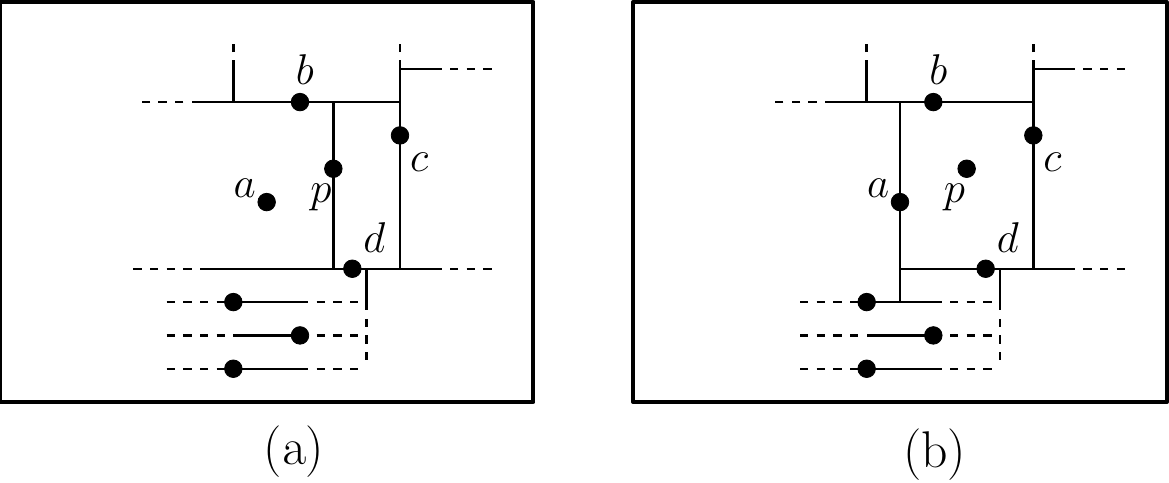}
    \caption{(a) The situation after applying procedure 3 in Figure \ref{fi:Procedure3}. (b) When extending the vertical segment of $a$ once we obtain a corner that could be rotated. }\label{fi:Procedure3b}
\end{figure}

\parag{Completing the analysis.}
If $p$ does not receive charge from procedure 3, then it has a charge of at most $16.5$ (see procedures 1 and 2 above). 
We thus consider a point $a$ that is charged as described in procedure 3. 
Let $k$ be the number of times that the vertical 2 segment of $a$ can be extended below (or above, if the spiral has the opposite orientation than the one in Figure \ref{fi:Procedure3}).
Equivalently, the degree 2 vertical segment of $a$ in $G'$ is extendable 0 times in one direction and $k$ times in the other. 
In Figure \ref{fi:Procedure3}(a), we have that $k=4$.
Before applying any procedures, this degree 2 segment has a charge of 
\[ 3+2+\cdots + (3-k) = \frac{(k+1)(6-k)}{2} = 3+\frac{5k-k^2}{2}.\]
Since the horizontal segment of $a$ receives at most 9 charge, the total charge of $a$ is at most $12+\frac{5k-k^2}{2}$. 

After extending the vertical degree 2 segment of $a$ once and removing the segment of $p$, the resulting point $p$ is handled according to procedure 1.
In particular, one of the left corners of the internal rectangle could be rotated.
See Figure \ref{fi:Procedure3b}.
Each of the following $k-1$ extensions of the vertical segment of $a$ may lead to a point $p$ that uses procedure 3 to charge $a$.
Since each such application of procedure 3 increases the charge of $a$ by 1, the charge of $a$ is now at most
\[ 12+\frac{5k-k^2}{2} +(k-1) = 11+\frac{7k-k^2}{2}. \] 

We recall that $a$ might receive an additional 1.5 charge from procedure 1. 
However, in that case the horizontal degree 2 segment of $a$ has a charge of at most 6, so the total charge goes down by 1.5.
Thus, the maximum charge $a$ may have remains $11+\frac{7k-k^2}{2}$.
This expression is maximized when $k$ is either 3 or 4, and is then 17.
This completes the proof that $\hat{d_2}(\pts)\geq 17/2 = 8.5$.

\parag{A final improvement.}
We recall the application of procedure 1 from two paragraphs above. 
This procedure ends with two instances of the point $p$ that have a charge of at most $16.5$.
These two instances of $p$ do not receive additional charge from procedure 3, since the segment to their right is not of degree 2.  
We also recall that $a$ receives a charge of at most 17, and so do the other $k-1\le 3$ instances of $p$ that move charge to $a$.
Thus, we have at most six points with a total charge of at most $4\cdot 17 + 2\cdot 16.5 = 101$.
We evenly redistribute this charge by giving each of the six points a charge of at most $101/6 = 16+5/6$. 

A careful reader may comment that, as $k$ becomes larger, we have more instances of $p$ with a charge of at most 17.
Thus, equally spreading the charge among $k+2$ points may lead to a worse bound when $k>4$. 
However, it could be easily verified that, each time we increase $k$ by 1, the original charge of $a$ decreases by at least 1. 
This decrease in the charge is more than enough to compensate for the extra point. 
The worst case is obtained when $k=4$.
\end{proof}

Combining Lemmas \ref{le:rcVSavgDeg} and \ref{le:strongDeg2} with an induction on $n$ immediately implies Theorem \ref{mainthm}.

\parag{Acknowledgements.}
We would like to thank Professor Adam Sheffer for his mentorship and for the suggestion of this topic.

\end{document}